\journal{Statistics \& Probability Letters}
\newcommand{\Cov}{\operatorname{Cov}}
\theoremstyle{plain}
\newtheorem{theorem}{Theorem}[section]
\newtheorem{corollary}[theorem]{Corollary}
\newtheorem{proposition}[theorem]{Proposition}
\theoremstyle{remark}
\newtheorem{remark}[theorem]{Remark}
\newtheorem*{example}{Example}
\begin{document}

\begin{frontmatter}
\title{Crossing the Kolmogorov--Smirnov Boundary: Exact Tails, Sharp Bounds, and Broken Pivots}

\author[zju,ucla]{Elvis Han Cui\corref{cor1}}
\ead{elviscuihan@g.ucla.edu}
\cortext[cor1]{Corresponding author.}

\author[zju,genentech]{Yihao Li}
\ead{li.yihao@gene.com}

\author[zju,google]{Zhuang Liu}
\ead{zhuangl@google.com}

\affiliation[zju]{organization={College of Environmental and Resource Sciences, Zhejiang University},
            city={Hangzhou},
            country={China}}

\affiliation[ucla]{organization={Department of Biostatistics, University of California, Los Angeles},
            state={CA},
            country={USA}}

\affiliation[genentech]{organization={Genentech},
            city={South San Francisco},
            state={CA},
            country={USA}}

\affiliation[google]{organization={Google},
            city={San Francisco Bay Area},
            state={CA},
            country={USA}}

\begin{abstract}
The Kolmogorov-Smirnov statistic is usually introduced as a supremum, but its finite-sample behavior is governed by a more local question: where does the empirical process first cross a boundary? This letter gives a partial answer through a finite-sample crossing ledger. The ledger rewrites the Smirnov-Birnbaum-Tingey one-sample formula as an explicit hitting-time law and yields a stable log-scale tail evaluator. For two samples, it gives one-wall and two-wall exact lattice recursions for arbitrary sample sizes, with the balanced reflection formula appearing as a special closed form. The same viewpoint explains the Dvoretzky-Kiefer-Wolfowitz-Massart inequality as an exponential compression of exact crossing sums and shows where exact distribution-free counting stops: under a composite null, fitted parameters change the path itself. Simulations and two small data diagnostics illustrate the resulting calibration warning.
\end{abstract}

\begin{keyword}
Kolmogorov-Smirnov statistic \sep empirical process \sep first-passage time \sep exact finite-sample distribution \sep DKWM inequality \sep composite null
\MSC[2020] 62G10 \sep 62G20 \sep 60F17
\end{keyword}

\end{frontmatter}
\section{Introduction}\label{sec:introduction}

The Kolmogorov-Smirnov (KS) statistic has a deceptively simple face: take the largest vertical gap between an empirical distribution function and a reference distribution, or between two empirical distribution functions. That single supremum has made the KS test a durable nonparametric tool for goodness-of-fit and two-sample comparison \cite{berger2014kolmogorov}. Yet the supremum is the end of the story, not the beginning. In a finite sample, the statistic is created by a concrete event along a path:
\[
\textit{where does the empirical process first cross a boundary?}
\]
This is the question we use as the organizing principle of the paper. Our
answer is deliberately partial: we identify the parts of the KS family where
crossing can be counted exactly, the parts where a sharp inequality is the
right compression, and the point at which fitted parameters end the
distribution-free calculation.

The crossing question is worth asking because the KS statistic is not merely a limit theorem in disguise. Figure~\ref{fig:zn-crossing} shows the basic geometry. In one sample, after the probability integral transform, the empirical process drifts deterministically between observations and jumps at data points. A boundary crossing therefore happens only through a finite set of possible hitting locations. This turns the one-sided KS tail probability into a finite hitting-time calculation. In two samples, the same idea becomes a ballot-path problem: pooled orderings of the two samples can be read as paths, and a boundary crossing can be counted or recursively excluded.

Classical KS results are often presented in separate languages: order-statistic integrals for one-sample exact laws, lattice paths for two-sample laws, empirical-process bounds for DKWM, and Gaussian projections for fitted models. Our aim is not to replace those theories with one grand formula. It is to give a compact operational answer to the question above: once a KS statistic is viewed as a crossing event, which parts can still be computed exactly, which parts should be compressed into sharp inequalities, and where does distribution-free counting genuinely stop?

The original contribution is this finite-sample crossing ledger. First, it states the Smirnov-Birnbaum-Tingey formula as a first-hitting distribution and gives a log-scale evaluator for the exact one-sided tail. Second, it gives one-wall and two-wall dynamic-programming recursions for arbitrary two-sample sizes $n$ and $m$, so the balanced reflection formula becomes a special case rather than the starting point. Third, it uses the same ledger to separate three regimes: exact finite enumeration, DKWM-type exponential compression, and composite-null testing, where estimated parameters break the pivotal path and force refitted simulation or empirical-process approximation.

\section{One Sample: The First Crossing Time}\label{sec:one-sample-one-sided}

Let $X_1,\ldots,X_n$ be a random sample with continuous distribution function $F$ and let $F_n$ be its empirical counterpart:
\[
F_n(t)=\frac{1}{n}\sum_{i=1}^n\mathbb{I}(X_i\le t).
\]
Define the normalized empirical process by
\[
Z_n(t)=\sqrt{n}\left(F_n-F\right)(t)
\]
and let $D_n^{+}$ and $D_n^-$ denote the one-sided Kolmogorov-Smirnov statistics:
\[
D_n^+=\sup_{t}\frac{Z_n(t)}{\sqrt{n}}=\sup_t(F_n(t)-F(t)),
\]
\[
D_n^-=\sup_{t}\frac{-Z_n(t)}{\sqrt{n}}=\sup_t(F(t)-F_n(t)).
\]

Here the supremum becomes literal. Between observations the centered empirical
process drifts downward; at an observation it jumps upward. A lower-boundary
crossing therefore cannot occur everywhere on the line, but only at a small
set of lattice points. The next theorem is the classical exact answer, written
in this first-passage language.

\begin{figure}[t]
\centering
\includegraphics[width=0.82\textwidth]{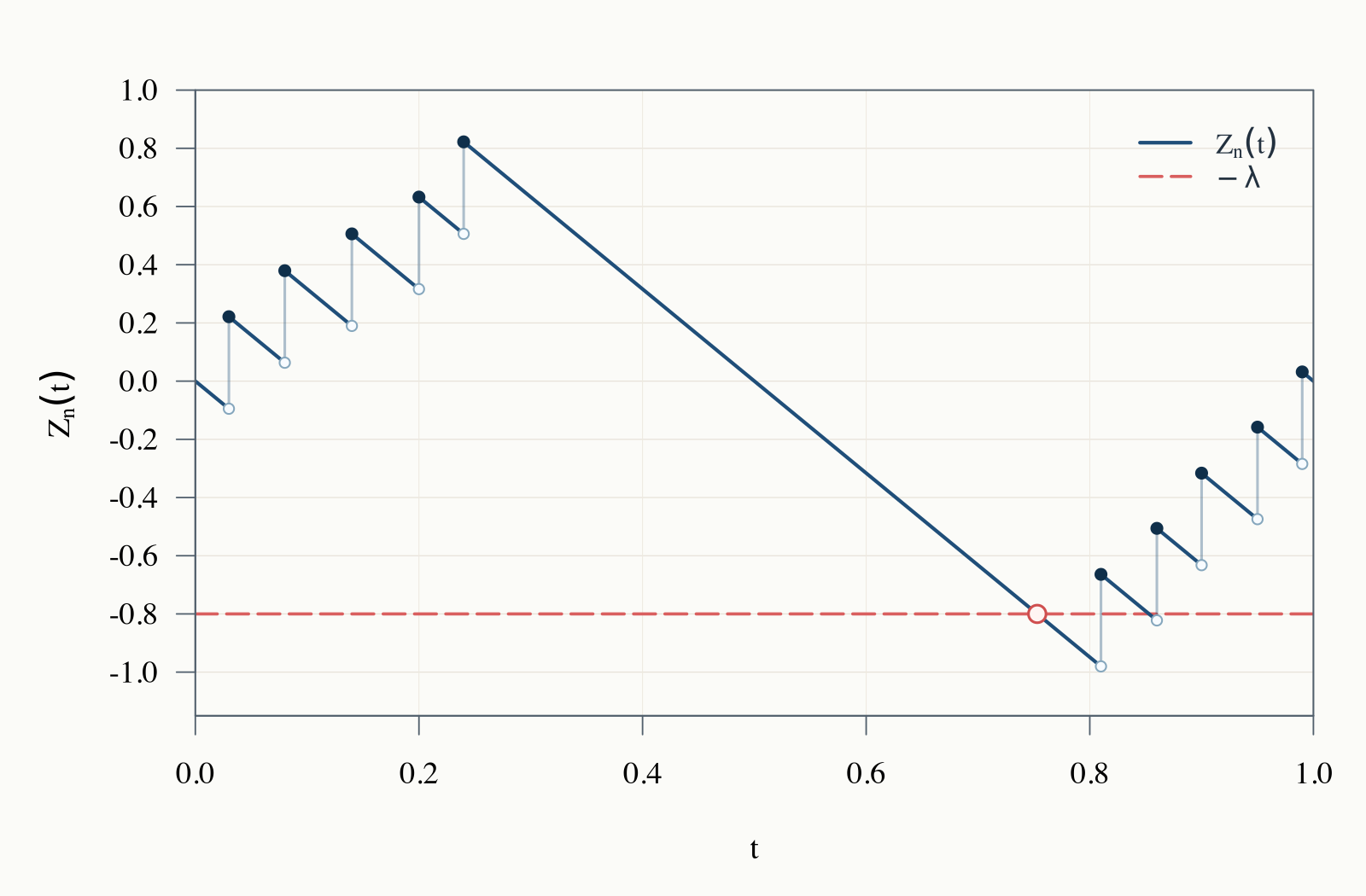}
\caption{A one-sample empirical process on the uniform scale for $n=10$. The
blue path is $Z_n(t)=\sqrt n\{F_n(t)-t\}$; it drifts downward between
observations and jumps upward at sample points. The red line is the lower
boundary $-\lambda$ with $\lambda=0.8$. The highlighted point is the first
boundary crossing, reducing the KS tail calculation to the finite hitting-time
question $t=\lambda/\sqrt n+j/n$.}
\label{fig:zn-crossing}
\end{figure}

\begin{theorem}[Smirnov-Birnbaum-Tingey \cite{birnbaum1951one}]
Let $0<\epsilon<1$ and set $\lambda=\sqrt{n}\epsilon$. Define
\[
\tau_n(\lambda)=\inf_t\{t:Z_n(t)\le-\lambda\}
\]
as the first time that $Z_n(t)$ hits $-\lambda$. Then
\begin{align}
    \mathbb{P}\left(D_n^->\epsilon\right)
    &=(1-\epsilon)^n+\epsilon\sum_{j=1}^{\lfloor n(1-\epsilon)\rfloor}\binom{n}{j}
    \left(\epsilon+\frac{j}{n}\right)^{j-1}
    \left(1-\epsilon-\frac{j}{n}\right)^{n-j}.
\end{align}
On the uniform scale, the corresponding first-passage probabilities are
\begin{align}
    \mathbb{P}\{\tau_n(\lambda)=\epsilon\}
    &=(1-\epsilon)^n,\\
    \mathbb{P}\left\{\tau_n(\lambda)=\epsilon+\frac{j}{n}\right\}
    &=\epsilon\binom{n}{j}\left(\epsilon+\frac{j}{n}\right)^{j-1}
    \left(1-\epsilon-\frac{j}{n}\right)^{n-j}
\end{align}
for $j=1,\ldots,\lfloor n(1-\epsilon)\rfloor$.
\end{theorem}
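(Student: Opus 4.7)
The plan is to first reduce to the uniform case via the probability integral transform: since $F$ is continuous, $F(X_1),\ldots,F(X_n)$ are i.i.d.\ $\mathrm{Unif}(0,1)$, and both $D_n^-$ and (after reparametrizing time by $F$) the hitting time depend on $F$ only through this transformation, so it suffices to prove everything assuming $F(t) = t$ on $[0,1]$. Under the uniform model, $Z_n$ has a rigid piecewise structure: on each interval $[U_{(j)}, U_{(j+1)})$ (with $U_{(0)} := 0$, $U_{(n+1)} := 1$) we have $Z_n(t) = \sqrt{n}(j/n - t)$, linear with slope $-\sqrt{n}$, and $Z_n$ jumps upward by $1/\sqrt{n}$ at every order statistic. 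The equation $Z_n(t) = -\lambda$ therefore has the unique solution $t = j/n + \epsilon$ inside the $j$-th interval whenever $U_{(j)} \le j/n + \epsilon < U_{(j+1)}$, so $\tau_n(\lambda)$ is almost surely supported on $\{j/n + \epsilon : 0 \le j \le \lfloor n(1-\epsilon)\rfloor\}$.

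Next I would characterize $\{\tau_n(\lambda) = j/n + \epsilon\}$ as the intersection of (i) the hit lying in the $j$-th interval, $U_{(j+1)} > j/n + \epsilon$, and (ii) no hit in any earlier interval, which by the linear-decrease geometry simplifies to $U_{(i+1)} \le i/n + \epsilon$ for every $0 \le i < j$. Integrating the joint order-statistic density $n!\,\mathbb{I}(0 < u_1 < \cdots < u_n < 1)$ over this event, the integration over $u_{j+1},\ldots,u_n$ is a simplex volume above the level $j/n + \epsilon$, equal to $(1-\epsilon-j/n)^{n-j}/(n-j)!$. This leaves
\[
\mathbb{P}\!\left(\tau_n(\lambda) = \tfrac{j}{n} + \epsilon\right) = \binom{n}{j}\, j!\, (1-\epsilon-j/n)^{n-j}\, I_j, \qquad I_j := \int_0^{\epsilon} du_1 \int_{u_1}^{1/n+\epsilon} du_2 \cdots \int_{u_{j-1}}^{(j-1)/n+\epsilon} du_j .
\]

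The technical heart of the proof is the identity $I_j = \epsilon(\epsilon + j/n)^{j-1}/j!$, which I would establish by induction on $j$. Fixing $u_1$ and translating $u_i \mapsto u_i - u_1$ $(i = 2,\ldots,j)$ inside the inner $(j-1)$-fold integral exhibits it as $I_{j-1}$ evaluated at the shifted argument $\epsilon - u_1 + 1/n$; the change of variable $v = \epsilon - u_1 + 1/n$ then gives the clean recursion $I_j(\epsilon) = \int_{1/n}^{\epsilon + 1/n} I_{j-1}(v)\,dv$. Substituting the inductive formula and using the antiderivative $u^j/j - u^{j-1}/n$ (after the further shift $u = v + (j-1)/n$), the lower-endpoint contribution at $u = j/n$ cancels identically, producing exactly $\epsilon(\epsilon + j/n)^{j-1}/j!$ from the upper endpoint. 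I expect this endpoint cancellation --- an Abel-type combinatorial miracle in analytic disguise --- to be the main obstacle, since without it the inductive expression would acquire spurious polynomial corrections.

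Given $I_j$, the per-hit probability (2) is immediate, and (1) follows by summing over $0 \le j \le \lfloor n(1-\epsilon)\rfloor$: the event $\{D_n^- > \epsilon\}$ coincides almost surely with $\{\tau_n(\lambda) < \infty\}$ because continuity of $F$ makes $\{D_n^- = \epsilon\}$ null, and isolating the $j = 0$ contribution $(1-\epsilon)^n$ from the rest of the sum yields the leading term displayed in (1).
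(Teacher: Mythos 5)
Your proposal is correct and follows essentially the same route as the paper: reduce to the uniform case, express $\{\tau_n(\lambda)=\epsilon+\tfrac{j}{n}\}$ as an order-statistic event, factor the resulting $n$-fold integral into the simplex volume $(1-\epsilon-\tfrac{j}{n})^{n-j}/(n-j)!$ times the $j$-fold Birnbaum--Tingey integral, and sum over $j$ (with the $j=0$ term giving $(1-\epsilon)^n$). The only substantive difference is that you actually prove the key identity $I_j=\epsilon(\epsilon+\tfrac{j}{n})^{j-1}/j!$ by induction via the recursion $I_j(\epsilon)=\int_{1/n}^{\epsilon+1/n}I_{j-1}(v)\,dv$ and the exact cancellation at the lower endpoint $u=\tfrac{j}{n}$, whereas the paper cites it from Birnbaum's work; your induction checks out, so this is a self-contained strengthening rather than a departure.
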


\begin{remark}
The reduction to $F(t)=t$ is distribution-free: after the probability integral transform, the distributions of $D_n^+$ and $D_n^-$ do not depend on $F$. On the original scale, the hitting locations are obtained by applying the generalized inverse $F^{-1}$ to the uniform-scale values $\epsilon+j/n$.
\end{remark}

\begin{remark}
The hitting time $\tau_n(\lambda)$ is discrete, although $\tau_n(\lambda)=+\infty$ may occur with positive probability. The process $Z_n(t)$ decreases between jump points; at each observation $X_i$, it jumps by $1/\sqrt{n}$. If $Z_n(t)$ does not cross below $-\lambda$ at $X_{(i)}$, the $i$-th order statistic, it must wait another $1/n$ in transformed time before crossing becomes possible. The first possible hitting time is $\epsilon=\lambda/\sqrt{n}$, corresponding to no observations below $\epsilon$. Thus the equation
\[
F_n(t)-t=-\epsilon
\]
has only finitely many possible solutions, all of the form $\epsilon+j/n$.
\end{remark}
\begin{proof}
It is enough to work on the uniform scale. Let
$U_{(1)}<\cdots<U_{(n)}$ be the order statistics of independent
$\operatorname{Unif}(0,1)$ variables, with $U_{(0)}=0$ and $U_{(n+1)}=1$.
On $[U_{(j)},U_{(j+1)})$, the empirical distribution function is $j/n$.
Thus the first crossing occurs at $t_j=\epsilon+j/n$ exactly when
\[
U_{(i)}\le \epsilon+\frac{i-1}{n}\quad(i=1,\ldots,j),
\qquad
U_{(j+1)}> \epsilon+\frac{j}{n}.
\]
The case $j=0$ gives $(1-\epsilon)^n$. For $j\ge1$, the probability is
$n!$ times the ordered-simplex volume cut out by these inequalities. The
Birnbaum-Tingey simplex identity \cite{birnbaum1951one} gives this volume as
\[
\frac{\epsilon}{j!(n-j)!}
\left(\epsilon+\frac{j}{n}\right)^{j-1}
\left(1-\epsilon-\frac{j}{n}\right)^{n-j}.
\]
Multiplying by $n!$ gives the displayed mass at $\epsilon+j/n$, and summing
over $j=0,\ldots,\lfloor n(1-\epsilon)\rfloor$ gives the tail probability.
The ordered-simplex calculation is written out in the supplementary appendix.
\end{proof}

\begin{remark}[Log-scale exact calculator]
The preceding formula is a practical $O(n)$ evaluator, not only a closed
form. Let $J=\lfloor n(1-\epsilon)\rfloor$ and define
\[
\ell_0=n\log(1-\epsilon),
\]
and, for $j=1,\ldots,J$,
\[
\ell_j=\log\epsilon+\log\binom{n}{j}
 +(j-1)\log\left(\epsilon+\frac{j}{n}\right)
 +(n-j)\log\left(1-\epsilon-\frac{j}{n}\right),
\]
with $\ell_j=-\infty$ whenever the last logarithm has zero argument. Then
\[
\log \mathbb{P}(D_n^->\epsilon)=\log\sum_{j=0}^J e^{\ell_j}.
\]
This log-sum-exp form avoids overflow and cancellation in direct evaluation;
an implementation is included in the source package.
\end{remark}

\section{Two Samples: Recursion Before Reflection}
The two-sample statistic has the same crossing flavor, but the path now comes
from the pooled ordering of two samples. With unequal sample sizes the boundary
is weighted, so a direct reflection formula is no longer the cleanest object.
The exact finite-sample calculation is instead a lattice recursion; reflection
appears when the lattice is balanced. The next two propositions are the
algorithmic core of the ledger: they count the paths that avoid one wall, or
both walls, for arbitrary $n$ and $m$.

Let $X_1,\ldots,X_n$ be a random sample with continuous distribution function $F$ and let $Y_1,\ldots,Y_m$ be another random sample with continuous distribution function $G$. Let $F_n$ and $G_m$ be the empirical counterparts of $F$ and $G$:
\[
F_n(t)=\frac{1}{n}\sum_{i=1}^n\mathbb{I}(X_i\le t),
\qquad
G_m(t)=\frac{1}{m}\sum_{j=1}^m\mathbb{I}(Y_j\le t).
\]

\begin{proposition}[Exact two-sample crossing recursion \cite{feller1968introduction}]
Let
\[
D_{n,m}^{+}=\sup_t\{F_n(t)-G_m(t)\}.
\]
Under $F=G$, fix $\delta>0$ and define $A_{0,0}=1$. For $0\le i\le n$ and
$0\le j\le m$, set $A_{i,j}=0$ outside the rectangle and, for $(i,j)\ne(0,0)$,
\[
A_{i,j}=
\mathbf{1}\left\{\frac{i}{n}-\frac{j}{m}<\delta\right\}
\left(A_{i-1,j}+A_{i,j-1}\right).
\]
Then
\[
\mathbb{P}\left(D_{n,m}^{+}<\delta\right)
=\frac{A_{n,m}}{\binom{n+m}{n}},
\qquad
\mathbb{P}\left(D_{n,m}^{+}\ge\delta\right)
=1-\frac{A_{n,m}}{\binom{n+m}{n}}.
\]
\end{proposition}

\begin{proof}
Under the continuous null $F=G$, every pooled ordering of the $n$ observations
from the first sample and the $m$ observations from the second sample is equally
likely. A partial ordering with $i$ observations from the first sample and $j$
from the second sample has empirical difference $i/n-j/m$. Therefore the event
$D_{n,m}^{+}<\delta$ is exactly the event that every visited lattice point
$(i,j)$ satisfies $i/n-j/m<\delta$. The recursion counts admissible paths to
$(i,j)$ by adding the admissible paths entering from $(i-1,j)$ and $(i,j-1)$,
and setting the count to zero whenever the boundary has already been hit. Since
there are $\binom{n+m}{n}$ pooled orderings, division by this total gives the
probability.
\end{proof}

The same ledger can keep both walls at once, giving an exact finite-sample
calibrator for the ordinary two-sided statistic when $n$ and $m$ are unequal.

\begin{proposition}[Two-wall recursion for unequal two-sample KS \cite{feller1968introduction}]
Let
\[
D_{n,m}=\sup_t|F_n(t)-G_m(t)|.
\]
Under $F=G$, fix $\delta>0$ and define $B_{0,0}=1$. For $0\le i\le n$ and
$0\le j\le m$, set $B_{i,j}=0$ outside the rectangle and, for $(i,j)\ne(0,0)$,
\[
B_{i,j}=
\mathbf{1}\left\{\left|\frac{i}{n}-\frac{j}{m}\right|<\delta\right\}
\left(B_{i-1,j}+B_{i,j-1}\right).
\]
Then
\[
\mathbb{P}\left(D_{n,m}<\delta\right)
=\frac{B_{n,m}}{\binom{n+m}{n}},
\qquad
\mathbb{P}\left(D_{n,m}\ge\delta\right)
=1-\frac{B_{n,m}}{\binom{n+m}{n}}.
\]
\end{proposition}

\begin{proof}
The preceding proof applies with the admissible half-plane replaced by the
strip $|i/n-j/m|<\delta$. The recursion counts paths that remain in this strip
from $(0,0)$ to $(n,m)$; division by the total number $\binom{n+m}{n}$ of
pooled orderings gives the probability.
\end{proof}

Only grid values of $|i/n-j/m|$ need be checked, so the same recursion also
returns exact critical values for any desired level.

When $n=m$, write $G_n$ for the second empirical distribution. The weighted
boundary becomes a constant level for a simple walk, and the recursion collapses
to the familiar reflection count.

\begin{theorem}[Gnedenko-Korolyuk reflection formula \cite{feller1968introduction}]
    Let $D_{n,n}^{+}$ be the one-sided Kolmogorov-Smirnov statistic:
    \[
    D_{n,n}^+=\sup_{t}\left(F_n(t)-G_n(t)\right).
    \]
    If $F=G$, then for $r=1,\ldots,n$,
    \begin{align}
        \mathbb{P}\left(D_{n,n}^+\ge\frac{r}{n}\right)
        =\frac{\binom{2n}{n-r}}{\binom{2n}{n}}.
    \end{align}
\end{theorem}
\begin{proof}
Encode each pooled ordering of the $X$ and $Y$ samples as a path with $2n$
steps: an $X$-observation contributes $+1$, and a $Y$-observation contributes
$-1$. After $2n$ steps, the path returns to $0$. The total number of such
paths is
\[
\binom{2n}{n}.
\]

The event $\{D_{n,n}^+\ge r/n\}$ is equivalent to the event that the path
hits level $r$. By the reflection principle \cite{feller1968introduction},
reflecting the path after its first hit of $r$ maps such paths bijectively
to paths that end at $2r$. Thus the number of crossing paths is
\[
\binom{2n}{n+r}=\binom{2n}{n-r}.
\]
Since all pooled orderings are equally likely under the null,
\[
\mathbb{P}\left(D_{n,n}^+\ge\frac{r}{n}\right)
=\frac{\binom{2n}{n-r}}{\binom{2n}{n}}.
\]
\end{proof}

\begin{corollary}[Feller limit \cite{feller1991introduction}]
    For fixed $r>0$, if $n\rightarrow\infty$, then
    \begin{align}
        \mathbb{P}\left(D_{n,n}^+\ge\frac{r}{\sqrt{n}}\right)\rightarrow e^{-r^2}.
    \end{align}
\end{corollary}
\begin{proof}
Let $k_n=\lceil r\sqrt n\rceil$. Since $D_{n,n}^{+}$ takes values on the grid
$\{0,1/n,\ldots,1\}$, the preceding theorem gives
\[
\mathbb{P}\left(D_{n,n}^+\ge\frac{r}{\sqrt n}\right)
=\frac{n!n!}{(n-k_n)!(n+k_n)!}.
\]
Stirling's formula, with $k_n/n\to0$ and $k_n^2/n\to r^2$, gives the limit
$e^{-r^2}$.
\end{proof}

\section{Two Sides: Compressing Crossings Into DKWM}
The two-sided one-sample KS statistic is defined as
\[
D_n=\sup_t|F_n(t)-F(t)|.
\]
The one-sided story keeps track of the first lower crossing. The two-sided
statistic asks the path to hit either wall. Exact two-sided formulas exist, but
for many applications the right object is the sharp envelope: a bound that
forgets the particular hitting location while preserving the correct exponential
scale.

\begin{theorem}[Dvoretzky-Kiefer-Wolfowitz-Massart \cite{dvoretzky1956asymptotic,massart1990tight}]
For every distribution function $F$ and every $\epsilon>0$,
\begin{align}
    \mathbb{P}\left(D_n>\epsilon\right)\le 2e^{-2n\epsilon^2}.
\end{align}
\end{theorem}

Massart \cite{massart1990tight} proved that the constant $2$ is sharp; see
also Kosorok \cite{kosorok2008introduction} for a modern empirical-process
treatment, including distribution functions with jumps. The connection with
the crossing formula above is that the one-sided event is exactly the finite
sum
\[
    \mathbb{P}(D_n^->\epsilon)
    =(1-\epsilon)^n+\epsilon\sum_{j=1}^{\lfloor n(1-\epsilon)\rfloor}
    \binom{n}{j}
    \left(\epsilon+\frac{j}{n}\right)^{j-1}
    \left(1-\epsilon-\frac{j}{n}\right)^{n-j}.
\]
Massart's argument controls this crossing sum, and its upper analogue, at the
scale $\exp(-2n\epsilon^2)$. Thus the DKWM inequality is not a separate
phenomenon in this narrative: it is the sharp uniform bound obtained after
the exact first-passage calculation is compressed into an exponential tail.

\begin{remark}[Numerical scale]
The exact calculation is small enough to be used directly. For example, when
$n=100$, the one-sided crossing formula gives
$\mathbb{P}(D_n^->0.10)=0.1266$ and
$\mathbb{P}(D_n^->0.12)=0.0517$, whereas the two-sided DKWM envelopes are
$0.2707$ and $0.1123$. For the unequal two-sample recursion, with
$(n,m)=(100,80)$ and $\delta=0.12$, the exact values are
$\mathbb{P}(D_{n,m}^{+}\ge\delta)=0.2570$ and
$\mathbb{P}(D_{n,m}\ge\delta)=0.5056$. These values are reproduced by the
exact-crossing script included in the source package.
\end{remark}

\section{Two-Sample Limits}
For the two-sided two-sample case, define
\[
D_{n,m}=\sup_t|F_n(t)-G_m(t)|.
\]
Under the null hypothesis $F=G$, its limiting distribution is
\begin{align}
    \lim_{n,m\rightarrow\infty}\mathbb{P}\left( \sqrt{\frac{nm}{n+m}}D_{n,m}> \lambda \right)&=2\sum_{k=1}^\infty (-1)^{k-1}e^{-2k^2\lambda^2}.
\end{align}
The two-wall recursion in Section~3 is exact but finite. The display above is
what remains when the lattice is scaled and allowed to converge: the path
becomes a Brownian bridge and the alternating exponential series replaces path
counting.
This limit can be shown via Donsker's theorem or empirical process theory
\cite{kosorok2008introduction}. For richer suprema over function classes, the
limiting distribution may also depend on the common distribution $F=G$.

\section{Composite Nulls: Where Exact Counting Stops}
The crossing ledger has a trap door. Everything above relies on knowing the
null distribution before seeing the data. Once parameters are estimated from
the same data, the transformed observations no longer behave like independent
uniforms under a fixed distribution. The path being tested has been altered by
the fitting step, so exact distribution-free counting stops.

In some scenarios, the i.i.d. sample $X_1,\ldots,X_n$ may come from $F_\theta$, where $\theta$ is a vector of unknown parameters. For example, $F_\theta$ may be the normal distribution and $\theta=(\mu,\sigma^2)^T$ contains the mean and variance. We first estimate $\theta$, for instance by maximum likelihood estimation or, more generally, by M-estimation. Define
\begin{align}
    Z_n(t,\theta)&=\sqrt{n}\left(F_n(t)-F_\theta(t)\right)\\
    Z_n(t,\widehat{\theta})&=\sqrt{n}\left(F_n(t)-F_{\widehat{\theta}}(t)\right).
\end{align}
The problem is to find the limiting distribution of $Z_n(t,\widehat{\theta})$. Setting aside measurability technicalities, which for general M-estimation are often handled through outer measurability, the following heuristic argument is due to Babu and Rao \cite{babu2004goodness}; see also \cite{durbin1973weak,van2000asymptotic, del2007lectures}.
\begin{align*}
    Z_n(t,\widehat{\theta})&=Z_n(t,\theta)+\sqrt{n}\left(F_\theta(t)-F_{\widehat{\theta}}(t)\right)\\
    &=Z_n(t,\theta)-\frac{\partial F_\theta(t)}{\partial\theta}^T\sqrt{n}(\widehat{\theta}-\theta)+o_{\mathbb{P}_\theta}(1)\\
    &=Z_n(t,\theta)-\frac{\partial F_\theta(t)}{\partial\theta}^T\frac{1}{\sqrt{n}}\sum_{i=1}^n\psi_\theta(X_i)+o_{\mathbb{P}_\theta}(1)
\end{align*}
where we have assumed that 
\[
\sqrt{n}(\widehat{\theta}-\theta)=\frac{1}{\sqrt{n}}\sum_{i=1}^n\psi_\theta(X_i)+o_{\mathbb{P}_\theta}(1),
\]
where $\psi_\theta$ is the influence function and $\mathbb{E}\psi_\theta(X_i)=0$. The weak limit can then be derived from the joint limit of
\[
\left(Z_n(\cdot,\theta),\frac{1}{\sqrt{n}}\sum_{i=1}^n\psi_\theta(X_i)\right).
\]
Donsker's theorem gives $Z_n(\cdot,\theta)\Rightarrow B(F_\theta(\cdot))=B\circ F_\theta$, where $B$ is a standard Brownian bridge, while the central limit theorem gives $\frac{1}{\sqrt{n}}\sum_{i=1}^n\psi_\theta(X_i)\Rightarrow \mathcal{N}(0,\Sigma(\theta))$, where $\Sigma(\theta)=\mathbb{E}\psi_\theta(X_1)^{\otimes 2}$. The covariance can be computed from
\[
\Cov(\mathbb{I}(X_1\le t),\psi_\theta(X_1))=\int_{-\infty}^t\psi_\theta(x)F_\theta(dx):=h(t,\theta).
\]
Hence, we have finite-dimensional weak convergence. Tightness then yields weak convergence of the stochastic process $Z_n(t,\widehat{\theta})$ \cite{babu2004goodness}; the limit is a mean-zero Gaussian process $Z(\cdot,\theta)$ with covariance
\begin{align}\label{eq:cov}
    \Cov\left(Z(s,\theta),Z(t,\theta)\right)
    &=F_\theta(s\wedge t)-F_\theta(s)F_\theta(t)\\
    &\quad-h(t,\theta)^T\frac{\partial F_\theta(s)}{\partial \theta}\nonumber\\
    &\quad-h(s,\theta)^T\frac{\partial F_\theta(t)}{\partial \theta}\nonumber\\
    &\quad+\frac{\partial F_\theta(s)}{\partial\theta}^T
      \Sigma(\theta)
      \frac{\partial F_\theta(t)}{\partial\theta}.\nonumber
\end{align}

To sum up, we have the following theorem.
\begin{theorem}[Durbin-Babu-Rao \cite{durbin1973weak,babu2004goodness}]
    Let $X_1,\ldots,X_n$ be a random sample with common distribution function $F_\theta$ and $\theta\in\mathbb{R}^k$. Let $\widehat{\theta}$ be an asymptotically linear estimator of $\theta$ with influence function $\psi_\theta$. Define
    \[
    D_n(\widehat{\theta})=\sup_{t}|F_n(t)-F_{\widehat{\theta}}(t)|.
    \]
    Then, at every continuity point $\epsilon$ of the limiting law,
    \[
    \mathbb{P}\left(\sqrt{n}D_n(\widehat{\theta})>\epsilon\right)
    \longrightarrow
    \mathbb{P}\left(\sup_t|Z(t,\theta)|>\epsilon\right),
    \]
    where $Z(\cdot,\theta)$ is a mean-zero Gaussian process with covariance specified in \eqref{eq:cov}. This probability can be approximated by bootstrap or Monte Carlo simulation.
\end{theorem}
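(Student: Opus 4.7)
The plan is to follow and rigorize the heuristic decomposition that the paper has already sketched, promoting the pointwise linearization to a uniform one in $t$ and then applying a continuous-mapping argument for the supremum. Concretely, I would first write
\begin{align*}
Z_n(t,\widehat{\theta}) &= Z_n(t,\theta) - \sqrt{n}\bigl(F_{\widehat{\theta}}(t)-F_\theta(t)\bigr)
\end{align*}
and then Taylor-expand $F_{\widehat\theta}(t)$ around $\theta$, so that
\begin{align*}
Z_n(t,\widehat\theta) &= Z_n(t,\theta) - \frac{\partial F_\theta(t)}{\partial\theta}^{T}\sqrt{n}(\widehat\theta-\theta) + R_n(t),
\end{align*}
where $R_n(t)$ is the Taylor remainder. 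Using the asymptotic linearity hypothesis
\[
\sqrt{n}(\widehat\theta-\theta) = \frac{1}{\sqrt{n}}\sum_{i=1}^n \psi_\theta(X_i) + o_{\mathbb{P}_\theta}(1),
\]
and writing $g_\theta(t) = \partial F_\theta(t)/\partial\theta$, the leading term becomes $Z_n(t,\theta) - g_\theta(t)^T \mathbb{P}_n\psi_\theta + o_{\mathbb{P}_\theta}(1)$ where $\mathbb{P}_n$ is the empirical measure. The target process is thus a deterministic linear functional of the empirical process applied to the class $\mathcal{F} = \{\mathbb{I}(\cdot \le t) - g_\theta(t)^T\psi_\theta(\cdot) : t\in\mathbb{R}\}$.

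The second step is to establish weak convergence of this process in $\ell^\infty(\mathbb{R})$. Since indicator classes $\{\mathbb{I}(\cdot\le t)\}$ form a VC class and, under mild regularity on $g_\theta$ (e.g.\ bounded variation in $t$) together with a square-integrable envelope for $\psi_\theta$, the class $\mathcal{F}$ is Donsker, the empirical process $\mathbb{G}_n f = \sqrt{n}(\mathbb{P}_n - P_\theta)f$ indexed by $\mathcal{F}$ converges weakly to a tight Gaussian process $\mathbb{G}_\theta$ with the usual covariance $P_\theta fg - P_\theta f\, P_\theta g$. Evaluating this covariance on pairs of elements of $\mathcal{F}$, and using $\mathbb{E}\psi_\theta(X_1)=0$ together with the identity
\[
\mathrm{Cov}\bigl(\mathbb{I}(X_1\le t),\psi_\theta(X_1)\bigr) = h(t,\theta),
\]
directly reproduces the covariance formula in equation \eqref{eq:cov}. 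Joint convergence of $(Z_n(\cdot,\theta), \mathbb{P}_n\psi_\theta)$ is automatic because both are continuous linear functionals of the same empirical process, and the CLT for $\sqrt{n}\,\mathbb{P}_n\psi_\theta\Rightarrow \mathcal{N}(0,\Sigma(\theta))$ is recovered as the marginal.

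The third step is to control the Taylor remainder $R_n(t)$ uniformly in $t$. Writing $R_n(t) = \sqrt{n}[F_\theta(t) - F_{\widehat\theta}(t) + g_\theta(t)^T(\widehat\theta-\theta)]$ and assuming $\theta\mapsto F_\theta(t)$ is, say, twice continuously differentiable with derivatives bounded uniformly in $t$ on a neighborhood of $\theta$, one has $|R_n(t)| \le C\sqrt{n}\|\widehat\theta-\theta\|^2 = O_{\mathbb{P}_\theta}(n^{-1/2})$, which vanishes in probability in the sup norm. Once $\sup_t|R_n(t)| = o_{\mathbb{P}_\theta}(1)$ is established, Slutsky combined with the continuous mapping theorem applied to the sup-norm functional $x\mapsto \|x\|_\infty$ on $\ell^\infty(\mathbb{R})$ delivers
\[
\sqrt{n}\,D_n(\widehat\theta) = \|Z_n(\cdot,\widehat\theta)\|_\infty \Rightarrow \sup_t|Z(t,\theta)|,
\]
and continuity of the distribution of the supremum of the centered Gaussian process (which follows from nondegeneracy of $Z(\cdot,\theta)$) converts this into the stated convergence of tail probabilities.

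The main obstacle will be the uniform-in-$t$ control of the remainder and the measurability issue flagged in the text: $\widehat\theta$ from M-estimation is not automatically Borel measurable, so the argument must be carried out with outer expectations in the sense of van der Vaart and Wellner, and the Donsker property of $\mathcal{F}$ must be verified with an envelope that has a finite $P_\theta$ second moment. A secondary technical point is that $g_\theta$ enters as an external multiplier of the unbounded index $\psi_\theta$; handling this cleanly requires either an $L^2(P_\theta)$-bracketing argument or an explicit preservation-of-Donsker-ness result, and this is where most of the real work of a fully rigorous proof would lie.
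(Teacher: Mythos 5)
Your proposal follows essentially the same route as the paper's own argument in Section 6: the decomposition $Z_n(t,\widehat{\theta})=Z_n(t,\theta)+\sqrt{n}\bigl(F_\theta(t)-F_{\widehat{\theta}}(t)\bigr)$, linearization through the influence function of $\widehat{\theta}$, joint weak convergence of $Z_n(\cdot,\theta)$ and $n^{-1/2}\sum_i\psi_\theta(X_i)$ via Donsker's theorem and the CLT, a tightness argument, and finally the continuous mapping theorem for the supremum. Your write-up is in fact somewhat more careful than the paper's heuristic sketch (uniform control of the Taylor remainder, an explicit Donsker class, and a sign convention for the influence function that is the one actually consistent with the covariance formula in \eqref{eq:cov}), so there is nothing to correct.
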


\begin{remark}[Bad examples for the naive KS calibration]
The theorem explains a common trap. If one estimates the parameter and then
uses the classical Brownian-bridge critical value as if $\theta$ were known, the
test calibrates the wrong process. Table~\ref{tab:composite-null} gives a small
Monte Carlo warning at nominal level $5\%$ with $n=100$. The classical cutoff is
$1.358/\sqrt{n}$; the bootstrap column uses a parametric bootstrap in which
parameters are re-estimated in every bootstrap sample. The simulation uses
seed 20260520, 2000 Monte Carlo samples, and 399 bootstrap samples per run;
the reproducible script is included as \texttt{scripts/ks\_composite\_simulation.py}.
\end{remark}

\begin{table}[t]
\centering
\caption{Empirical rejection probabilities under the null, based on 2000 Monte
Carlo samples. The bootstrap column uses 399 bootstrap resamples per Monte Carlo
sample.}
\label{tab:composite-null}
\begin{tabular}{lcc}
\hline
Null model & KS cutoff & Refit bootstrap\\
\hline
Known $N(0,1)$ & 0.051 & --\\
Fitted normal $(\mu,\sigma^2)$ & 0.000 & 0.044\\
Fitted exponential rate & 0.007 & 0.052\\
\hline
\end{tabular}
\end{table}

\begin{remark}[Real-data diagnostics]
Two small data checks make the calibration point concrete, using the
\texttt{trees} and \texttt{attenu} data sets bundled with R \cite{r2026}. For
tree volume fitted by a lognormal model, $D_n(\widehat\theta)=0.101$, the
simple-null $p$-value is $0.913$, and the refitted-bootstrap $p$-value is
$0.590$. For earthquake acceleration, the corresponding values are $0.115$,
$0.016$, and $<0.001$. The important feature is not the lognormal model itself,
but the calibration step: after fitting, the simple-null and refitted-bootstrap
$p$-values answer different questions. The reproducible script is included in
the source package.
\end{remark}

\begin{remark}[Cases outside the linear expansion]
There are also models where the Durbin-Babu-Rao expansion itself should not be
used without extra work. A simple example is a distribution with parameter-dependent
support, such as $\operatorname{Unif}(0,\theta)$ with $\theta$
estimated by the sample maximum: the estimator has an $n$-rate, non-Gaussian
limit and is not asymptotically linear in the form assumed above. Mixture
models near nonidentifiable parameter values create a different version of the
same problem. In such cases, a refitted simulation scheme may still be useful,
but the limiting Gaussian projection in \eqref{eq:cov} is no longer the right
justification.
\end{remark}

\begin{remark}
In the case of maximum likelihood estimation, the influence function is
\[
\psi_\theta(x)=I^{-1}_\theta s_\theta(x),
\]
where $I^{-1}_\theta$ is the inverse of Fisher information and $s_\theta(x)$ is the score function evaluated at $x$.
\end{remark}

\begin{example}
Let $X\sim N(\mu,\sigma^2)$. The influence function for the maximum likelihood estimator is
\[
\psi_\theta(x)=
\begin{pmatrix}
    x-\mu\\
    (x-\mu)^2-\sigma^2
\end{pmatrix},
\]
and
\[
\Sigma(\theta)=I_\theta^{-1}=
\begin{pmatrix}
    \sigma^2 & 0\\
    0 & 2\sigma^{4}
\end{pmatrix}.
\]
\end{example}

\section{Discussion}
The starting question was deliberately local: where does the empirical process
first cross a boundary? The crossing ledger developed here gives a compact
answer. In the one-sample one-sided case, the probability integral transform
exposes a finite lattice and yields an exact, stable tail calculation. In the
two-sample case, pooled orderings become paths; one-wall and two-wall recursions
give exact finite-sample probabilities for arbitrary $n$ and $m$, while
reflection explains the special balanced one-sided closed form. For two-sided
one-sample statistics, the same picture no longer gives a single first-passage
distribution, but it does explain why the DKWM inequality is the sharp
exponential envelope for crossing either boundary.

The ledger is also useful because it records where exactness fails. Under a
composite null, parameter estimation changes the path being tested; the
resulting process is a projected Gaussian limit rather than a distribution-free
Brownian bridge. The small simulation in Table~\ref{tab:composite-null} is meant
as a warning rather than a benchmark: a naive KS cutoff can have essentially
zero rejection under fitted models, while refitted bootstrap calibration returns
the size to the intended scale. The two data checks show the corresponding
diagnostic use: refitted calibration can distinguish a benign fitted example
from a clear lack of fit.

Thus the practical message is simple. When the null is simple and the statistic
is one-sided, compute the exact crossing probability. For two-sample statistics,
use the one-wall or two-wall recursion when finite-sample calibration matters.
When a two-sided one-sample summary is enough, DKWM is the sharp exponential
compression. When parameters are estimated, refit in the simulation or work with
the projected empirical-process limit.

\section*{Declarations}

\paragraph{Declaration of generative AI and AI-assisted technologies in the writing process}
During the preparation of this work the authors used OpenAI's ChatGPT/Codex
to assist with language editing, LaTeX formatting, and consistency checks.
After using this tool, the authors reviewed and edited the content as needed
and take full responsibility for the content of the publication.

\paragraph{Competing interests}
The authors declare no competing interests related to this methodological note.

\paragraph{Funding}
No specific funding was reported for this work.

\paragraph{Data and code availability}
The real-data diagnostics use the \texttt{trees} and \texttt{attenu} data sets
distributed with R. The numerical results in Table~\ref{tab:composite-null}
are based on synthetic Monte Carlo samples. The source package includes scripts
for the exact crossing calculation, the composite-null simulation, and the
real-data diagnostics.


\bibliographystyle{elsarticle-num}

\bibliography{references, pkg, historical}

\end{document}